\newcommand{\KK}{\mathbb{K}}
\newcommand{\Aut}{\mathrm{Aut}}
\theoremstyle{plain}
\newtheorem{theorem}{Theorem}[section]
\newtheorem{propos}[theorem]{Proposition}
\newtheorem{lem}[theorem]{Lemma}
\newtheorem{cor}[theorem]{Corollary}
\theoremstyle{definition}
\newtheorem{const}[theorem]{Construction}
\newtheorem{de}[theorem]{Definition}
\newtheorem{ex}[theorem]{Example}
\theoremstyle{remark}
\newtheorem{zam}[theorem]{Remark}
\begin{document}

\title[Varieties with...a finite number of automorphism group orbits]{Varieties with a torus action of complexity one having a finite number of automorphism group orbits}

\author{Sergey Gaifullin and Dmitriy Chunaev}

\thanks{The research was done within the framework of the HSE Fundamental Research Program in 2023.}

\subjclass[2020]{Primary 14J50, 14R20 ; Secondary 14L30, 13A50}

\keywords{Automorphism, torus action, orbits, locally nilpotent derivation, affine variety}

\address{HSE University, Faculty of Computer Science, Pokrovsky Boulvard 11, Moscow, 109028 Russia. \ \ \ \ \ \ 
\linebreak 
Lomonosov Moscow State University, Leninskie gory, 1, Moscow, 119991, Russia.\ \ \ \ \ \ \ \ \ \ \ \ \ \ \ \ \ \ \ \ \ \ \ \ \ 
\linebreak
Moscow Center of Fundamental and Applied Mathematics, Moscow, Russia.}
\address{HSE University, Faculty of Computer Science, Pokrovsky Boulvard 11, Moscow, 109028 Russia. \ \ \ \ \ \ 
\linebreak 
Lomonosov Moscow State University, Leninskie gory, 1, Moscow, 119991, Russia.}

\maketitle

\begin{abstract}
In this work we obtain sufficient conditions for a variety with a torus action of complexity one to have a finite number of automorphism group orbits.
\end{abstract}

\section{Introduction}

Let $\mathbb{K}$ be an algebraically closed field of characteristic zero. Let us concider an algebraic variety $X$ over $\mathbb{K}$. We denote by $\mathrm{Aut}(X)$ the group of regular automorphisms of the variety~$X$. The variety $X$ is decomposed onto a union of $\mathrm{Aut}(X)$-orbits with respect to the natural $\mathrm{Aut}(X)$-action on $X$. For some natural classes of varieties numbers of $\mathrm{Aut}(X)$-orbits are finite. Examples of classes of varieties with finite numbers of $\mathrm{Aut}(X)$-orbits are, for example, the following ones
\begin{itemize}  
\item algebraic groups;
\item homogeneous spaces of algebraic groups;
\item toric varieties; 
\item spherical varieties (this class strictly contains the class of toric varieties), see~\cite{K} and~\cite{T};
\item flexible normal affine surfaces, the definition of a flexible variety one can find in~\cite{AFKKZ}. In the same work it is proved that the automorphism group of a flexible variety action on the smooth locus transitively.
\item homogeneous varieties, i.e. varieties with unique $\mathrm{Aut}(X)$-orbits. Examples of homogeneous varieties that are not homogeneous spaces of algebrais groups  are given in~\cite{AZ}. Also smooth affine varieties with actions of reductive groups with an open orbits are homogeneous varieties, see the proof in~\cite{GSh}.
\end{itemize}

The class of toric varieties is the class of varieties admitting an action of an algebraic torus of complexity zero, i.e. with an open orbit.  In this paper we consider the class of affine irreducible algebraic varieties with a torus action of complexity 1. That are varieties admitting effective actions of a torus with dimension equal to dimension of the variety minus one. We prove that a variety  $X$ with an action of complexity 1, satisfying some additional conditions, namely rational without nonconstant invertible functions and with a finitely generated class group, having a finite number of  $\mathrm{Aut}(X)$-orbits if it admits a homogeneous locally nilpotent derivation of the horizontal type. Recall that a locally nilpotent derivation is of the horizontal type if it acts nontrivially on rational invariants of the torus.  Locally nilpotent derivations correspond to algebraic subgroups of $\mathrm{Aut}(X)$ isomorphic to the additive group of the ground field. Such subgroups we call $\mathbb{G}_a$-subgroups. The above conditions for locally nilpotent differentiation in terms of this subgroup are that this $\mathbb{G}_a$-subgroup must be normalized by the action of the torus and it acts nontrivially on the rational invariants of this torus.

A particular cases of varieties with a torus action of complexity 1 are trinomial hypersurfaces given by equations
$$
T_{01}^{l_{01}}\ldots T_{0n_0}^{l_{0n_0}}+T_{11}^{l_{11}}\ldots T_{1n_1}^{l_{1n_1}}+T_{21}^{l_{21}}\ldots T_{2n_2}^{l_{2n_2}}=0, \qquad n_0\geq 0,\  n_1,n_2>0.
$$
If $n_0=0$, then we assume the first monomial to be equal to 1. Orbits of automorphism group of nonrigid (i.e. admitting a niontrivial $\mathbb{G}_a$-subgroup in $\Aut(X)$) trinomial  hypersurfaces are investigated in~\cite{G-S}. In particular, in~\cite{G-S} it is proved that the number of orbits is finite. Note that if a trinomial hypersurface is rigid, then the number of  $\mathrm{Aut}(X)$-orbits is infinite. The automorphism group of rigid trinomial hypersurfaces are described in~\cite{AG2}.  In the present paper we generalize the result of~\cite{G-S} proving a sufficient condition for the number of $\Aut(X)$-orbits on a trinomial variety to be finite, see Theorem~\ref{mmaaiinn} and Corollary~\ref{gortip}. Recall that a trinomial variety is an affine variety given by a coordinated system of trinomial equations, see the rigorous definition in Construction~\ref{odyn}.
Each trinomial variety $X$ is a product of a trinomial variety $Y$ and an $m$-dimensional affine space by definition. If  $m=0$, then $Y=X$.
A sufficient condition for the number of  $\Aut(X)$-orbits to be finite is that  the variety $Y$ is not rigid, i.e. admittes a nontrivial $\mathbb{G}_a$-subgroup of $\mathrm{Aut}(Y)$. 
This condition can be reformulated explicitely in terms of degrees of variables in equations, i.e. in terms of data we use to construct the variety. Trinomial varieties are studied  in~\cite{HS, HH, AHHL, H-W, ABHW, H-W2, G, E-G-S} and others. Trinomial varieties are particular cases of a varieties with a torus actions of complexity 1. A criterion of rigidity for trinomial varieties is obtained in~\cite{E-G-S}, see also works \cite{Ar} and \cite{G}, where some particular results were obtained. This criterion plays a key role in the proof of the sufficient condition of finiteness of number of $\Aut(X)$-orbits.

An arbitrary rational irreducible affine variety without non-constant invertible functions with a finitely generated class group admitting a  torus action of complexity one can be reduced to the case of a trinomial variety using the Cox construction, see for example~\cite{ADHL}. Recall that the Cox construction matches the so-called Cox ring to any normal variety $X$ without nonconstant invertible functions and with a finitely generated  divisor class group. If this ring is finitely generated, one can consider its spectrum $\overline{X}$, which is called the total coordinate space of $X$. The variety $X$ can be canonically realized as a categorical quotient of $\overline{X}$ by the action of a quasitorus (that is, the product of an algebraic torus and a finite abelian group). It is proved in~\cite{H-W}, that the total coordinate ring of a (rational irreducible affine variety without non-constant invertible functions) variety admitting a  torus action of complexity one is a trinomial variety. Using the standard technique for automorphisms of the Cox ring, a locally nilpotent differentiation of horizontal type homogeneous with respect to the character group of the maximal torus can be lifted to one on the total coordinate space. Therefore,  there will be only a finite number of orbits of the automorphism group on the total coordinate space. The connection between automorphism groups of a variety and its total coordinate space is described in~\cite{AG1}.  Using the results of this work, we show that there are only a finite number of orbits of the automorphism group on the original variety. Thus, we obtain that a sufficient condition for the finiteness of the number of $\Aut(X)$-orbits for an arbitrary (rational irreducible affine without non-constant invertible functions) variety with a  torus action  of complexity 1 consists in the existence of a locally nilpotent derivation of horizontal type homogeneous with respect to the character group of the maximal torus.

The authors are grateful to I.V. Arzhantsev for fruitful discussions. The first author is a Young Russian Mathematics award winner and would like to thank its sponsors and jury.

\section{Locally nilpotent derivations}
\label{os}
In this section we gather preliminaries on the field of locally nilpotent derivations. Detailed information in this area can be found, for example, in the book~\cite{F}. 

Let $A$ be a commutative assosiative domain over the field $\mathbb{K}$. We assume that the algebra~$A$ is finitely generated, i.e. it is the algebra $\mathbb{K}[X]$ of regular functions on an affine variety~$X$.

\begin{de}
   By  a {\it derivation} of the algebra $A$ we mean a linear operator $\delta\colon A\rightarrow A$, sutisfying the Leibnits rule:
    $\delta(ab)=a\delta(b)+b\delta(a)$.
\end{de}
\begin{de}
A derivation $\delta\colon A\rightarrow A$ is called {\it locally nilpotent} (LND), if for each $a\in A$ there exists a positive integer~$n$ such, that $\delta^n(a)=0$.
\end{de}

Suppose we have a grading on $A$ by a commutative group~$G$
$$
A=\bigoplus_{g\in G}A_g.
$$
A derivation $\delta$ is called homogeneous, if it takes homogeneous elements to homogeneous ones. It is easy to show that for each homogeneous LND $\delta$ there is such $g_0\in G$, that $\delta$ maps $A_g$ to $A_{g+g_0}$. The element $g_0$ is called the {\it degree} of the derivation $\delta$. It is not difficult to prove that every derivation can be decomposed onto a finite sum of homogeneous ones, which are called {\it homogeneous components} of the derivation. 

Suppose we have a $\mathbb{Z}$-grading. Let $\delta$ be an LND of $A$. Then $\delta=\sum\limits_{i=l}^k\delta_i$. In~\cite{R} it is proved that the extremal components $\delta_l$ and $\delta_k$ are also locally nilpotent. This implies that a $\mathbb{Z}$-graded algebra admits a nonzero LND if and only if it admits a nonzero $\mathbb{Z}$-homogeneous LND. This statement can be easily extended to the case of a $\mathbb{Z}^n$-grading.

Locally nilpotent differentiations are closely related to automorphisms of the algebra (the variety). Namely, we can consider the {\it exponent } of an LND $\delta$
$$
\exp(\delta)=\mathrm{id}+\delta+\frac{\delta^2}{2!}+\frac{\delta^3}{3!}+\ldots
$$
Since $\delta$ is locally nilpotent, when we apply $\exp(\delta)$ to an element $a\in A$, the sum is finite. The exponent of an LND is an automorphism. Moreover each LND $\delta$ corresponds to the following subgroup of automorphisms 
$$
\mathcal{H}_\delta=\{\exp(s\delta)\mid s\in\KK\}.
$$
This is a $\mathbb{G}_a$-subgroup, i.e. an algebraic subgroup in automorphism group (the automorphism group itself usually is not algebraic) isomorphic to the additive group of the field $\KK$.

Gradings on $A=\KK[X]$ by the free abelian group $\mathbb{Z}^n$ are in a natural bijection with algebraic actions of the algebraic torus $T=(\mathbb{K}^\times)^n$ on $X$. (Here $\mathbb{Z}^n$ is identified with the group of characters $\mathfrak{X}(T)$ of the torus $T$.) An LND $\delta$ is $\mathbb{Z}^n$-homogeneous if and only if the torus $T$ is contained in the normalizer of the corresponding $\mathbb{G}_a$-subgroup $\mathcal{H}_\delta$. We often say that  a derivation is $T$-homogeneous instead of  $\mathfrak{X}(T)$-homogeneous.

Each derivation of $\KK[X]$ can be uniquely extended to a derivation of the field of rational functions $\KK(X)$.
\begin{de}
Suppose we are given by an action of a torus  $T$ on a variety $X$. We say that a $T$-homogeneous LND $\delta$ has  the {\it vertical type}, if its extension on  $\KK(X)$ vanishes on the field of $T$-invariant fonctions $\KK(X)^T$. If $\delta$ does not vanish on $\KK(X)^T$, then $\delta$ has the {\it horizontal type}.
\end{de} 

\begin{de}
A variety $X$ is called {\it rigid}, if the algebra $\KK[X]$ does not admit any nonzero LNDs.
\end{de}

\section{Trinomial varieties}

In this section we investigate orbits of automorphism group of a trinomial variety. First of all let us give a rigorous definition of a trinomial variety according to~\cite{H-W}.

\begin{const}\label{odyn} \cite[Construction 1.1]{H-W}. Suppose we are given by positive integers~$r$ and~$n$, a nonnegative integer  $m$ and $q \in \left\{ {0, 1} \right\}$. Let us fix a partition  $n = n_{q} +  \ldots+ n_{r}$ of~$n$ onto positive integer summands. 
Let us consider the polynomial algebra $B$ in $m+n$ variables. These variables we denote $T_{ij}$ and~$S_k$:
$$
B= \KK \left[ {T_{ij}, S_{k} \,|\, q \leq i \leq r, \!\ 1 \leq j \leq n_{i}, \!\ 1 \leq k \leq m} \right].
$$
For each $i = q, \ldots, r$ we fix a tuple of positive integers $l_{i} = \left( {l_{i1},  \ldots , l_{in_{i}}} \right)$. So we can consider the following monomial:
$$
	T_{i}^{l_{i}} = T_{i1}^{l_{i1}}  \ldots T_{in_{i}}^{l_{in_{i}}} \in B. 
$$
Now we define the {\it trinomial algebra} $R(A)$, which we construct by some data $A$.  These data are different for two types of trinomial algebras. 
\par \emph{Type 1.} $ q = 1, \!\ A = (a_{1},  \ldots, a_{r}), \!\ a_{j} \in \KK, \!\ a_{i} \neq a_{j} $ if $i \neq j$.  Let us put  $I = \left\{ {1, \ldots, r - 1} \right\}$ and 
\begin{equation*}
	g_{i} = T_{i}^{l_{i}} - T_{i+1}^{l_{i+1}} - (a_{i+1} - a_{i}) \in B, \ i \in I.
\end{equation*}
\par \emph{Type 2.} $ q = 0$, 
\begin{equation*}
	A = \begin{pmatrix}
		a_{10} \ a_{11} \ a_{12} \cdots a_{1r} \\
		a_{20} \ a_{21} \ a_{22} \cdots a_{2r} \\
	\end{pmatrix}
\end{equation*}
is such a matrix with elements from $\mathbb{K}$, that every two columns are linearly independent. Let us put $I = \left\{ {0,  \ldots , r - 2} \right\}$ and 
\begin{equation*}
	g_{i} = \det \begin{pmatrix}
		T_{i}^{l_{i}} \ T_{i+1}^{l_{i+1}} \ T_{i+2}^{l_{i+2}} \\
		a_{1i} \ a_{1i+1} \ a_{1i+2}\\
		a_{2i} \ a_{2i+1} \ a_{2i+2}
	\end{pmatrix} \in B, \ i \in I.
\end{equation*}
For both types $R(A) = B/(g_{i}\, |\, i \in I)$.
\end{const}
\begin{de} The variety $X(A) = \mathrm{Spec}(R(A))$ is called a {\it trinomial variety}.  The type of a trinomial variety is the type of the corresponding trinomial algebra.\end{de} 

\begin{zam}
It is easy to see that the dimension of $X(A)$ equals $m+n-r+1$.
\end{zam}

\begin{zam} By definition, each trinomial variety $X(A)$ is isomorphic to a product of another trinomial variety $Y(A)$ and an $m$-dimensional affine space $X(A)\cong Y(A)\times \mathbb{A}^m$. To obtain the algebra of regular functions on  $Y(A)$ one should eliminate generators $S_k$  from $R(A)$. The type of $Y(A)$ coincide with the type of $X(A)$.
\end{zam}

We can define an action of an algebraic torus $\mathbb{T}\cong (\KK^\times)^{n+m-r}$ of complexity 1 on the variety $X(A)$. It is known that the character group of an algebraic torus is a free abelian group, and setting of an action of a torus of dimension $k$ on $X$ is equivalent to setting of a $\mathbb{Z}^k$-grading on the algebra of regular function $\KK[X]$. It is more convenient to define the $\mathbb{T}$-action on $X(A)$ in these terms, i.e. to define $\mathbb{Z}^{m+n-r}$-grading on $R(A)$. We define this grading as follows let us put that the variable $T_{ij}$ has the degree $w_{ij}$ and the variable $S_k$ has degree $v_k$. These degrees with relations of commutativity and $\deg T_i^{l_i}=\deg T_j^{l_j}$ generate the group $\mathbb{Z}^{m+n-r}$, see  more details  in~\cite{H-W}. We need the folowing one-dimensional subtori in the torus $\mathbb{T}$. The subtorus $\Lambda_{s,u,v}\cong \KK^\times$, where $1\leq u,v\leq n_s$ acts by the following rule:
$$
t\cdot T_{su}=t^{l_{sv}}T_{su}, \qquad t\cdot T_{sv}=t^{-l_{su}}T_{sv},
$$
$$
t\cdot T_{pq}=T_{pq}\text{ for all } (p,q),\text{ except }(s,u)\text{ and }(s,v).
$$
In case of trinomial varieties of type 2 we define one more one-dimensional subtorus  $\Omega$ in $\mathbb{T}$. An element $t\in \Omega$ multiply $T_{i1}$ by $t^{\prod_{j\neq i}l_{j1}}$, and $T_{is}$ for $s\geq 2$ do not change.

In~\cite[Theorem~4]{E-G-S} the following criterion of rigidity of a trinomial variety is obtained.
\begin{propos}\label{EGSh}
A trinomial variety $X(A)$ is not rigid if and only if one of the following conditions holds:
\begin{enumerate}[leftmargin=2\parindent]
\item[1)] $m \neq 0$,
\item[2)] The variety $X(A)$ has type 1 and there exists such an $a \in \lbrace1,\ldots,r\rbrace$, that for all $i \in \lbrace1,\ldots,r\rbrace\setminus\lbrace a\rbrace$ there is $j(i) \in\lbrace1,\ldots,n_i\rbrace$, such that $l_{ij(i)}=1$,
\item[3)] The variety $X(A)$ has type 2 and ane of the following conditions holds:

\begin{enumerate}[leftmargin=2\parindent]
	\item[a)] There are two such numbers $a, b \in\lbrace0,\ldots,r\rbrace$, that for each $i \in\lbrace0,\ldots,r\rbrace\setminus\lbrace a,b\rbrace$ there is $j(i) \in \lbrace1,\ldots,n_i\rbrace$, such that $l_{ij(i)}=1$,
	\item[b)]  There are three such numbers $a, b, c \in\lbrace 0,\ldots,r\rbrace$, that for each  
 $$k \in\lbrace0,\ldots,r\rbrace\setminus\lbrace a,b,c\rbrace$$ there is $j(k) \in \lbrace1,\ldots,n_k\rbrace$, such that $l_{kj(k)}=1$, and for each $i \in \lbrace a,b\rbrace$ there is $v(i) \in\lbrace1,\ldots,n_i\rbrace$, such that $l_{iv(i)}=2$ and for all $w \in\lbrace1,\ldots,n_i\rbrace$ the numbers $l_{iw}$ are even.
\end{enumerate}
	
\end{enumerate}
	
\end{propos}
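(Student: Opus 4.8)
The plan is to prove sufficiency and necessity separately, in both directions first reducing to homogeneous derivations. Since $R(A)$ carries the $\mathbb{Z}^{m+n-r}$-grading described above, the result recalled in Section~\ref{os} (a $\mathbb{Z}^k$-graded algebra has a nonzero LND if and only if it has a nonzero homogeneous one) shows that $X(A)$ is non-rigid precisely when $R(A)$ admits a nonzero $\mathbb{T}$-homogeneous LND, and every such LND is either of vertical or of horizontal type with respect to the complexity-one torus $\mathbb{T}$.

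For sufficiency one exhibits an explicit homogeneous LND in each of the four cases and checks, directly on the generators $T_{ij}$ and $S_k$, that it annihilates every $g_i$ (so that it descends to $R(A)$) and that a suitable power of it kills each generator (so that it is locally nilpotent). If $m\neq 0$, then $R(A)=R(Y(A))\otimes\KK[S_1,\dots,S_m]$ and $\partial/\partial S_1$ works. In case 2) one fixes the index $a$; since $T_{ij(i)}$ occurs in the monomial $T_i^{l_i}$ with exponent $1$ for every $i\neq a$, the relations $g_i$ let one express the variables $T_{ij(i)}$ with $i\neq a$ after inverting the remaining variables of the corresponding blocks, and one then prescribes $\delta$ on the $T_{ij}$ so that it differentiates along the $a$-th block and propagates consistently through the chain of trinomials; case 3a) is the type-$2$ analogue with two exceptional blocks $a$ and $b$. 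Case 3b) is the delicate one: one first passes to the subtorus $\Omega$ introduced above and uses that for $i\in\{a,b\}$ all exponents $l_{iw}$ are even with $l_{iv(i)}=2$, so that after this twist the relevant monomials become perfect squares and a derivation of the same shape as in case 3a) can be written down. In every case the resulting $\delta$ turns out to be of horizontal type.

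For necessity, assume $X(A)$ is non-rigid, so $R(A)$ carries a nonzero $\mathbb{T}$-homogeneous LND $\delta$. If $m\neq 0$ we are in case 1), so assume $m=0$, that is, $X(A)=Y(A)$. The first step is that $\delta$ must be of horizontal type, equivalently that $Y(A)$ admits no nonzero vertical homogeneous LND: this follows from the classification of homogeneous LNDs on affine $T$-varieties of complexity one, and can alternatively be seen directly, since a vertical homogeneous LND would have to scale the $T_{ij}$ in a way incompatible with local nilpotence. Granting this, one analyzes the horizontal LND $\delta$ through its action on the generators $T_{ij}$ subject to $\delta(g_i)=0$ and to $\mathbb{T}$-homogeneity; using that $\ker\delta$ has transcendence degree $\dim Y(A)-1$, one deduces that in all but at most one block (type $1$), in all but at most two blocks (generic type $2$), or in all but at most three blocks together with the even-exponent constraint (remaining type $2$), the monomial $T_i^{l_i}$ must contain a variable occurring with exponent $1$ — which are precisely conditions 2), 3a) and 3b).

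I expect the main obstacle to be the necessity direction, and within it the fine classification of horizontal $\mathbb{T}$-homogeneous LNDs on $Y(A)$. Ruling out vertical LNDs and handling the blocks that contain a variable of exponent $1$ is comparatively routine, but pinning down exactly which exponent patterns $(l_{ij})$ admit a horizontal LND — in particular separating the plain case 3a) from the even-exponent case 3b) in type $2$, and showing that no other patterns occur — requires careful bookkeeping of the interplay between the $\mathbb{G}_a$-action, the exponents $l_{ij}$ and the matrix $A$, and that is where the bulk of the argument lies.
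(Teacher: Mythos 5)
The paper does not actually prove Proposition~\ref{EGSh}: it is quoted verbatim from \cite[Theorem~4]{E-G-S}, so the ``proof'' in the paper is a citation. Judged on its own merits, your proposal is a road map rather than a proof, and the core of both directions is missing. On the sufficiency side, only case 1) is genuinely established (via $\partial/\partial S_1$). For cases 2) and 3a) you describe the intended shape of the derivation (``differentiates along the $a$-th block and propagates consistently through the chain of trinomials'') but never write it down, and the whole content of these cases is precisely the verification that a concrete $\delta$ satisfies $\delta(g_i)=0$ and is locally nilpotent; without an explicit formula there is nothing to check. For case 3b) the mechanism you invoke is not meaningful: ``passing to the subtorus $\Omega$'' is just acting by (or regrading along) a one-parameter subgroup of $\mathbb{T}$, which cannot change the exponents $l_{iw}$ or turn the monomials $T_a^{l_a}$, $T_b^{l_b}$ into perfect squares --- they are already squares by the hypothesis that all $l_{iw}$ are even, and the known constructions in this case rest on a different idea (factoring $P^2+Q^2=(P+\sqrt{-1}\,Q)(P-\sqrt{-1}\,Q)$ for the two distinguished blocks and combining this with the exponent-one variables in the remaining blocks), not on any torus twist.

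The necessity direction is where the theorem really lives, and there you only state what would have to be done. The claim that for $m=0$ a vertical homogeneous LND ``would have to scale the $T_{ij}$ in a way incompatible with local nilpotence'' is an unsupported assertion, not an argument; excluding vertical LNDs is itself a nontrivial step. More importantly, the classification of horizontal $\mathbb{T}$-homogeneous LNDs --- showing that their existence forces exactly the exponent patterns of 2), 3a), 3b) and nothing else, in particular separating the generic type-2 case from the even-exponent case --- is exactly the part you defer as ``careful bookkeeping where the bulk of the argument lies.'' Since neither the explicit sufficiency constructions nor the necessity classification is carried out, the proposal does not constitute a proof of the proposition; if you do not intend to reprove \cite[Theorem~4]{E-G-S}, the honest course is to cite it, as the paper does.
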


\begin{zam}
For the variety $Y(A)$ the first condition does not hold. Therefore the variety $Y(A)$ is not rigid if and only if one of the conditions 2 and 3 holds.
\end{zam}

The first main result of this paper is the following sufficient condition for a trinomial variety to have finite number of orbits of automorphism group. 

\begin{theorem}\label{mmaaiinn}
If the variety $Y(A)$ is not rigid, then  the variety $X(A)$ has finite number of $\mathrm{Aut}(X(A))$-orbits. 
\end{theorem}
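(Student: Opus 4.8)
The plan is to reduce the statement to the existence of a single horizontal locally nilpotent derivation on $\KK[X(A)]$, to combine it with the torus and the evident $\mathbb{G}_a$-subgroups in order to exhibit an open orbit, and then to treat the boundary by induction on $\dim X(A)$. For Step~1, the existence of a horizontal LND: since $Y(A)$ is not rigid, Proposition~\ref{EGSh}, applied to $Y(A)$ (for which condition~1) is vacuous), forces one of conditions~2) or~3). In each of these cases I would write down, case by case, a $\TT$-homogeneous LND $\delta$ of $\KK[Y(A)]$ — in the ``$l_{ij(i)}=1$'' situations an elementary Demazure-type derivation sending all $T_{pq}$ to zero except one distinguished variable, which is sent to an appropriate monomial; in case~3b) the even-exponent derivation — and then verify, reading off the derivations exhibited in the proof of \cite{E-G-S}, that $\delta$ does not vanish on $\KK(Y(A))^{\TT}$, i.e. that $\delta$ is of \emph{horizontal} type. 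Extending $\delta$ by zero along the factor $\mathbb{A}^m$ yields a horizontal $\TT$-homogeneous LND of $\KK[X(A)]$, hence a $\mathbb{G}_a$-subgroup $\mathcal{H}_{\delta}\subseteq\Aut(X(A))$ normalized by $\TT$.

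For Step~2, let $G\subseteq\Aut(X(A))$ be the connected subgroup generated by $\TT$, by $\mathcal{H}_{\delta}$, and by all remaining $\TT$-normalized $\mathbb{G}_a$-subgroups visible from the data — the translations $S_k\mapsto S_k+p$ with $p\in\KK[Y(A)]$ homogeneous of the right degree, and the replicas attached to the subtori $\Lambda_{s,u,v}$ and $\Omega$. Because the $\TT$-action has complexity one, a general $\TT$-orbit in $X(A)$ has codimension one, and because $\delta$ is horizontal, $\mathcal{H}_{\delta}$ moves a general point off its $\TT$-orbit; hence $\langle\TT,\mathcal{H}_{\delta}\rangle$ already has an orbit of dimension $\dim X(A)$. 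A connected group acting on the irreducible variety $X(A)$ with a full-dimensional orbit has a unique open orbit $\mathcal{O}$, and $Z:=X(A)\setminus\mathcal{O}$ is a proper closed $G$-stable subset.

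For Step~3 one runs an induction on $\dim X(A)$. The claim is that $Z$ is contained in the union of the coordinate subvarieties $\{T_{ij}=0\}$ together with finitely many closures of non-generic $\TT$-orbits, and that after passing to suitable locally closed strata each piece is isomorphic to $X(A')\times\mathbb{A}^{m'}$ for a trinomial variety $X(A')$ of strictly smaller dimension, or to a toric variety, or to an affine space, in such a way that the stabilizer of the stratum in $G$ restricts onto a sufficiently large subgroup of the automorphism group of the smaller model (in particular containing a restricted torus and restricted $\mathbb{G}_a$-subgroups). One then checks that the relevant non-rigidity hypothesis of Proposition~\ref{EGSh} is inherited by the $X(A')$ that occur, so the induction hypothesis applies to them, while toric varieties and affine spaces have finitely many automorphism orbits by classical results; assembling the finitely many strata, each with finitely many $\Aut(X(A))$-orbits, with the single orbit $\mathcal{O}$ gives finiteness for $X(A)$.

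The substantive difficulty is Step~3. A boundary stratum, regarded as an abstract variety, may be \emph{rigid} and hence have infinitely many orbits under its own automorphism group — already a set such as $\{T_{ij}=0\}$ inside a trinomial surface of the form $T_{01}^2+T_{11}^2+T_{21}^{l}$ is a punctured affine line — so one is saved only because automorphisms of the ambient $X(A)$, in practice restrictions of $\TT$ and of $\TT$-normalized $\mathbb{G}_a$-subgroups, act on the stratum with finitely many orbits. Making this rigorous requires a careful description of how the combinatorial data $A$, the partition $n=n_q+\dots+n_r$, the exponents $l_{ij}$, and even the type change when a variable is set to zero (a block can vanish, a trinomial can degenerate to a binomial or a monomial, type~2 can collapse to type~1), together with a matching statement that $\TT$-normalized $\mathbb{G}_a$-subgroups of $X(A)$ restrict to a rich enough family on each stratum and that the horizontal-LND property propagates down the stratification. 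That bookkeeping, resting on the rigidity criterion of Proposition~\ref{EGSh}, is where the real work lies.
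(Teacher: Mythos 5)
Your Step~3 is a genuine gap, and it is precisely the place where the paper's argument differs from, and is much simpler than, what you propose. The boundary does not require any induction on dimension, any analysis of how the data $A$, the exponents, or the type degenerate on a stratum, and it does not need non-rigidity at all. The key observation you are missing is a transitivity lemma: for every nonempty set $J$ of variables $T_{ij}$, the stratum $L_Y(J)=\{y\in Y(A)\mid T_{ij}(y)=0\Leftrightarrow T_{ij}\in J\}$ is a single $\mathbb{T}$-orbit. This is proved directly from the trinomial equations: if $T_{is}\in J$, then $T_i^{l_i}$ vanishes, and suitable linear combinations of the defining equations pin down the values of the remaining monomials $T_j^{l_j}$ (in type~2 up to the action of the one-parameter subtorus $\Omega$), after which the subtori $\Lambda_{j,u,v}$ let you adjust the individual nonzero coordinates to match any prescribed point of the stratum; the $\mathbb{A}^m$-factor is handled by translations in the $S_k$. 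So every $L(J)$ with $J\neq\varnothing$ lies in a single orbit of a group of ambient automorphisms, with no inheritance of non-rigidity to check --- exactly the issue you flag (strata may be rigid as abstract varieties, types collapse, etc.) evaporates, because one never invokes the stratum's own automorphism group. As written, your induction cannot be closed: the induction hypothesis (non-rigidity of the smaller model) may simply fail on a stratum, and your fallback claim that restrictions of ambient $\mathbb{T}$-normalized subgroups act on each stratum with finitely many orbits is exactly the statement that needs proof and is not supplied.

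For the open stratum $L(\varnothing)$ your Step~2 is close to the paper, but the paper's version is sharper and avoids your unproved description of $Z=X(A)\setminus\mathcal{O}$: non-rigidity of $Y(A)$ gives an LND $\delta$ of $R(A)$ with $\delta(T_{ij})\neq 0$ for some $(i,j)$ (no case-by-case construction from Proposition~\ref{EGSh} and no verification of horizontality is needed for Theorem~\ref{mmaaiinn}; a nonzero LND of $\KK[Y(A)]$ extended by zero on the $S_k$ suffices, since $\KK[Y(A)]$ is generated by the $T_{ij}$). By \cite[Principle~5]{F}, $\delta(T_{ij})$ is not divisible by $T_{ij}$, so there is a point $x$ with $T_{ij}(x)=0$ and $\delta(T_{ij})(x)\neq 0$; for $G=\langle\mathbb{T},\mathcal{H}_\delta\rangle$ the orbit $Gx$ meets $L(\{T_{ij}\})$ but leaves it, hence has dimension $>\dim\mathbb{T}$ and is open (orbits of algebraically generated groups are locally closed by \cite[Proposition~1.3]{AFKKZ}). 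The complement of this open orbit has dimension at most $\dim\mathbb{T}$, while every point of $L(\varnothing)$ has a $G$-orbit of dimension at least $\dim\mathbb{T}$ (it contains a full-dimensional $\mathbb{T}$-orbit), so $L(\varnothing)$ is covered by finitely many $G$-orbits. Combining this with the transitivity lemma for the finitely many nonempty $J$ finishes the proof --- no stratification bookkeeping, no induction.
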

\begin{proof}
Suppose $Y(A)$ is not rigid. Let us fix a subset $J$ of the set of variables $T_{ij}$. This subset can be chosen by $2^n$ ways. Let us prove that each set  
$$
L(J)=\{x\in X(A)\mid T_{ij}(x)=0\Leftrightarrow T_{ij}\in J\}
$$
is contained in a finite union of $\mathrm{Aut}(X)$-orbits. This implies that the number of orbits is finite.  
We firstly consider the case $J\neq\varnothing $. 
Denote by $L_Y(J)$ the analogical subset, but in the variety $Y$:
$$
L_Y(J)=\{y\in Y(A)\mid T_{ij}(y)=0\Leftrightarrow T_{ij}\in J\}.
$$
Let us prove the following lemma. 
\begin{lem}
Suppose $J\neq \varnothing$.  Then for every two points $\alpha$ and $\beta$ in $L_Y(J)$ we can take $\alpha$ to $\beta$, by an action of an element of $\mathbb{T}$. 
\end{lem}\begin{proof}
Let $T_{is}\in J$. Acting on $\alpha$ by one-dimensional tori $\Lambda_{i,s,v}$, we can make the values of variables $T_{iv}$, which are not in $J$, equal to any nonzero numbers. In particular we can make values of variables $T_{iv}$ in the image of $\alpha$ to be equal to their values in $\beta$. 

Further proof we carry out separately for different types of $Y(A)$. Let $Y(A)$ be of the type 1. We can consider linear combinations of equations that give $Y(A)$, to obtain $T_i^{l_i}-T_j^{l_j}=a_j-a_i$. Since $T_i^{l_i}(\alpha)=0$, we have $T_j^{l_j}(\alpha)=a_i-a_j\neq 0$. Analogically $T_j^{l_j}(\beta)=a_i-a_j$. Hence all the variables $T_{ju}$ have nonzero values at $\alpha$ and $\beta$. It is easy to see, that using subtori $\Lambda_{j,u,v}$ we can take $\alpha$ to $\alpha'$ such that values of all the variables $T_{ju}$ at $\alpha'$ and $\beta$ coincide. Thus by consistent applying elements of  $\mathbb{T}$ we can take $\alpha$ to a point with the same values of all the coordinates as at $\beta$, i.e. to~$\beta$.

Now let $Y(A)$ is of type 2. Again we can consider linear combinations of equations that give $Y(A)$, to obtain  the following equations
\begin{equation*}
	\det \begin{pmatrix}
		T_{p}^{l_{p}} \ T_{q}^{l_{q}} \ T_{u}^{l_{u}} \\
		a_{1p} \ a_{1q} \ a_{1u}\\
		a_{2p} \ a_{2q} \ a_{2u}
	\end{pmatrix} =0
\end{equation*}
for all triples $(p,q,u)$. Substituting $u=i$ and the points $\alpha$ and $\beta$ we obtain, that $\lambda T_{p}^{l_p}(\alpha)+\mu T_{q}^{l_q}(\alpha)=0$ for some nonzero $\lambda$ and $\mu$ and $\lambda T_{p}^{l_p}(\beta)+\mu T_{q}^{l_q}(\beta)=0$. If for all $j\neq i$ it holds $T_j^{l_j}(\alpha)=0$, then for all $j$ we have $T_j^{l_j}(\alpha)=T_j^{l_j}(\beta)$. Otherwise let us chose such $j\neq i$ that $T_j^{l_j}(\alpha)\neq 0$. By an element of $\Omega$ we can take $\alpha$ to $\alpha'$ such, that $T_j^{l_j}(\alpha')=T_j^{l_j}(\beta)$. But then the obtained equations imply that for each $k$ it holds $T_k^{l_k}(\alpha')=T_k^{l_k}(\beta)$. Using  $\Lambda_{k,u,v}$, we can take $\alpha'$ to $\beta$ analogically to the case of type 1.

The lemma is proved.
\end{proof}

The above lemma implies that if $J$ is nonempty, then for every two points $\alpha$ and $\beta$ in $L(J)$ acting by an element $t$ of $\mathbb{T}$ we can move $\alpha$ to $\alpha'=t\cdot\alpha$ such that values of all coordinates  $T_{ij}$ coincide at $\alpha'$ and~$\beta$. From the other hand the group of parallel translations acts on the affine space $\mathbb{A}^m$ transitively. We can take  $\alpha'$ to $\beta$ by the element of this group.
\begin{zam}
If $J$ is nonempty we do not use the condition that $Y(A)$ is not rigid.
\end{zam}
It remains to prove that $L(\varnothing)$ is contained in finite union of $\mathrm{Aut}(X(A))$-orbits. Since the variety $Y(A)$ is not rigid, there exists such an LND $\delta$ on $R(A)$, that $\delta(T_{ij})\neq 0$ for some pair $(i,j)$. By~\cite[Principle~5]{F} $\delta(T_{ij})$ is not divisible by $T_{ij}$ in $R(A)$. Therefore there exists such a point  $x\in L(\{T_{ij}\})$, that $\delta(T_{ij})(x)\neq 0$. Let us consider the group $G$, generated by $\mathbb{T}$ and the $\mathbb{G}_a$-subgroup $\{\exp(s\delta)\mid s\in\KK\}$. The subgroup $G$ of $\mathrm{Aut}(X(A))$ is algebraically generated, i.e. it is generated by algebraic groups. By~\cite[Proposition~1.3]{AFKKZ} each $G$-orbit is locally closed in Zariski topology. In particular, the dimension of this orbit is well defined. Since $Gx$ contains $L(\{T_{ij}\})$, but it is not contained in  $L(\{T_{ij}\})$, the dimension of $Gx$ is greater than $\dim\,L(\{T_{ij}\})=\dim\,\mathbb{T}=\dim\,X(A)-1$. Therefore, $Gx$ is open in $X(A)$. The complement of this orbit is a may be reducible variety of dimension not grater than $\dim\,\mathbb{T}$. But the dimension of $\mathbb{T}$-orbit of any point in $L(\varnothing)$ equals $\dim\,\mathbb{T}$. Hence, the dimension of $G$-orbit of any point in $L(\varnothing)$ is not less than $\dim\,\mathbb{T}$. Therefore, $L(\varnothing)$ is covered by a finite number of $G$-orbits. (This orbits can be not contained in $L(\varnothing)$.) Since $G$-orbits are contained in $\mathrm{Aut}(X(A))$-orbits, this implies the statement of the theorem.

\end{proof}

\begin{zam}
Using LNDs from~\cite[Section~4]{G} and \cite[Lemma~10]{G} one can explicitly construct LNDs on nonrigid trinomial varieties. It can be explicitly proved that in conditions of Theorem~\ref{mmaaiinn} the set $L(\varnothing)$ is contained in a unique $\mathrm{Aut}(X(A))$-orbit.
\end{zam}

\begin{zam}\label{zz2}
   It follows from the proof of Theorem~\ref{mmaaiinn} that each subset $L(J)$, where $J\neq\varnothing$, is a $\mathbb{T}$-orbit, and the subset $L(\varnothing)$ is covered by a finite number of orbits of the group generated by  $\mathbb{T}$ and any $\mathbb{G}_a$-subgroup, nontrivially acting on $Y(A)$.
\end{zam}

The fact that the $\mathbb{T}$-normalizable subgroup $\mathbb{G}_a$-subgroup acts nontrivially on $Y(A)$ can be formulated in terms of the corresponding LNDs. 
\begin{lem}\label{eqt}
A homogeneous with respect to $\mathbb{T}$ LND $\delta$ on $R(A)$ has vertical type if and only in $\delta(T_{ij})=0$ for all pairs~$(i,j)$.
\end{lem}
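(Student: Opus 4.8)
The plan is to prove both implications by first pinning down $\KK(X(A))^{\mathbb{T}}$ and then using that the monomials $T_i^{l_i}$, or their ratios, are $\mathbb{T}$-semi-invariant.

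\emph{Preliminary reduction.} I would first observe that every defining relation $g_i$ of Construction~\ref{odyn}, hence every relation defining the grading group, involves only the variables $T_{ij}$. Therefore the grading group splits canonically as $\mathfrak{X}(\mathbb{T})=\mathfrak{X}(\mathbb{T}_Y)\oplus\mathbb{Z}^m$, where the $\mathbb{Z}^m$-summand is spanned by the weights of $S_1,\ldots,S_m$, and $R(A)=\KK[Y(A)][S_1,\ldots,S_m]$ as a polynomial ring compatibly with the gradings. From this I would deduce $\KK(X(A))^{\mathbb{T}}\subseteq\KK(Y(A))$: a $\mathbb{T}$-invariant rational function is $\mathbb{Z}^m$-homogeneous of degree $0$ in $\KK(Y(A))(S_1,\ldots,S_m)$, and writing it as a quotient of two $\mathbb{Z}^m$-homogeneous polynomials of the same degree $\beta\in\mathbb{Z}^m_{\geq 0}$, both are $\KK(Y(A))$-multiples of the single monomial $S^\beta$, so the quotient lies in $\KK(Y(A))$.

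\emph{The direction $(\Leftarrow)$.} Suppose $\delta(T_{ij})=0$ for all $(i,j)$. Then $\delta$ annihilates the subalgebra $\KK[Y(A)]\subseteq R(A)$ generated by the $T_{ij}$ (it kills a generating set), hence its extension to $\KK(X(A))=\operatorname{Frac}R(A)$ annihilates $\KK(Y(A))=\operatorname{Frac}\KK[Y(A)]$, and in particular it annihilates $\KK(X(A))^{\mathbb{T}}$ by the preliminary reduction; so $\delta$ has vertical type.

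\emph{The direction $(\Rightarrow)$.} Assume $\delta$ has vertical type, so $\delta$ kills every $\mathbb{T}$-invariant element of $\KK(X(A))$. For type $1$, homogeneity of $g_i=T_i^{l_i}-T_{i+1}^{l_{i+1}}-(a_{i+1}-a_i)$ together with $a_{i+1}\neq a_i$ forces $\deg T_i^{l_i}=0$, so each $T_i^{l_i}$ is $\mathbb{T}$-invariant and $\delta(T_i^{l_i})=0$, whence $\sum_k l_{ik}\,\delta(T_{ik})/T_{ik}=0$ for every block $i$. For type $2$, homogeneity of $g_i=\det(\cdots)$ and pairwise independence of the columns of $A$ force all the $T_i^{l_i}$ to share a common weight, so the ratios $T_j^{l_j}/T_0^{l_0}$ (with $T_0^{l_0}=1$ when $n_0=0$) are $\mathbb{T}$-invariant; hence $\delta(T_j^{l_j})=(T_j^{l_j}/T_0^{l_0})\,\delta(T_0^{l_0})$, i.e. $\sum_k l_{jk}\,\delta(T_{jk})/T_{jk}$ is independent of the block $j$. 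In either case I would fix a block $j$, clear denominators in the resulting identity against another block when necessary to obtain a genuine identity in $R(A)$, and reduce it modulo the prime element $T_{j1}$ of $R(A)$: all terms are divisible by $T_{j1}$ except $l_{j1}\,\delta(T_{j1})\prod_{k\neq 1}T_{jk}$ times a factor in the variables of the other block, and since $T_{j1}$ is coprime to that factor and to $\prod_{k\neq 1}T_{jk}$, it must divide $\delta(T_{j1})$; then the additivity of $\deg_\delta$ on products (\cite{F}) gives $\delta(T_{j1})=0$. Running this over all pairs $(i,j)$ yields $\delta(T_{ij})=0$.

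\emph{Main obstacle.} The only genuinely delicate point is the type~$2$ case: there the monomials $T_i^{l_i}$ need not be $\mathbb{T}$-invariant, so one must pass to the ratios $T_j^{l_j}/T_0^{l_0}$ and then correctly propagate the identity $\delta(T_j^{l_j})=(T_j^{l_j}/T_0^{l_0})\delta(T_0^{l_0})$ to all blocks before clearing denominators. Everything else rests on primeness of the $T_{ij}$ in $R(A)$ (already invoked in Theorem~\ref{mmaaiinn}) and on the principle that a prime dividing its own $\delta$-image is annihilated by $\delta$.
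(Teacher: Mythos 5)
Your $(\Leftarrow)$ direction is fine: the splitting off of the $S$-weights and the conclusion $\KK(X(A))^{\mathbb{T}}\subseteq\KK(Y(A))$ is a correct substitute for the paper's observation that the invariant field lies in the field generated by the monomials $T_i^{l_i}$ (resp.\ their quotients in type 2). The gap is in $(\Rightarrow)$. Your key step reduces the identity $\delta(T_j^{l_j})=0$ (type 1), resp.\ the cross-multiplied identity $T_0^{l_0}\delta(T_j^{l_j})=T_j^{l_j}\delta(T_0^{l_0})$ (type 2), modulo the prime $T_{j1}$, claiming that the only term not visibly divisible by $T_{j1}$ is the one containing $\delta(T_{j1})$. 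But that term is $l_{j1}\,T_{j1}^{\,l_{j1}-1}\prod_{k\neq 1}T_{jk}^{\,l_{jk}}\,\delta(T_{j1})$ (times $T_0^{l_0}$ in type 2); it escapes divisibility by $T_{j1}$ only when $l_{j1}=1$. Whenever $l_{j1}\geq 2$ the congruence modulo $T_{j1}$ reads $0\equiv 0$ and gives no information, so your argument establishes $\delta(T_{ij})=0$ only for variables of exponent one, while the lemma asserts it for all variables --- and exponents $\geq 2$ are exactly the interesting case in view of Proposition~\ref{EGSh}. For type 1 this is repairable without any divisibility: verticality gives $T_i^{l_i}\in\ker\delta$ since $\deg T_i^{l_i}=0$, and kernels of LNDs are factorially closed, so each factor $T_{ik}$ lies in $\ker\delta$. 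For type 2, however, no such quick fix follows from what you have: verticality only yields $\delta\bigl(T_i^{l_i}/T_j^{l_j}\bigr)=0$, i.e.\ $T_j^{l_j}\delta(T_i^{l_i})=T_i^{l_i}\delta(T_j^{l_j})$, and an identity $b\,\delta(a)=a\,\delta(b)$ does not force $\delta(a)=\delta(b)=0$ (already for $\delta=\partial_x$ on $\KK[x,y,z]$ with $a=xy$, $b=xz$); moreover $R(A)$ need not be factorial, so coprimality manipulations across blocks are not innocent. So the type~2 half of $(\Rightarrow)$, which you yourself flag as the delicate point, is genuinely open in your sketch.

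Note that the paper avoids this computation entirely by proving $(\Rightarrow)$ in contrapositive form: if $\delta(T_{ij})\neq 0$ for some pair, then by Remark~\ref{zz2} the group $G$ generated by $\mathbb{T}$ and the corresponding $\mathbb{G}_a$-subgroup has finitely many orbits on $X(A)$, hence an open orbit, hence $\KK(X(A))^{G}=\KK$; since $\KK(X(A))^{G}=\KK(X(A))^{\mathbb{T}}\cap\ker\delta$ while $\KK(X(A))^{\mathbb{T}}\neq\KK$, the derivation $\delta$ cannot kill all $\mathbb{T}$-invariants and is therefore horizontal. If you want a purely computational proof, you must either import an argument of this kind or find a genuinely new way to handle the blocks all of whose exponents are at least two in type~2.
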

\begin{proof}
 It is easy to prove that the field of $\mathbb{T}$-invariants is contained in the field generated by all monomials $T_i^{l_i}$. Exactly, in case of type 1 these two fields coincide and in case of type 2 the field of $\mathbb{T}$-invariants is generated by quotients of such monomials. In any case if $\delta(T_{ij})=0$ for all pairs $(i,j)$, then the derivation has the vertical type.  
 
  Conversely if $\delta$ does not equal to zero at some $T_{ij}$, then by Remark~\ref{zz2} the group $G$, generated by $\mathbb{T}$ and~$\mathcal{H}_\delta$, acts on $X(A)$ with a finite number of orbits. Hence, one of these orbits is open. Therefore, the field of $G$-invariants coincide with $\mathbb{K}$.  On the other hand 
    $$\KK(X(A))^G=\KK(X(A))^{\mathbb{T}}\cap \mathrm{Ker}\,\delta.$$
    At the same time $\KK(X(A))^{\mathbb{T}}\neq\KK$. Therefore, $\delta$ has the horizontal type.
\end{proof}

We obtain the following assertion.

\begin{cor}\label{gortip}
If $R(A)$ admits a $\mathbb{T}$-homogeneous LND $\delta$ of horizontal 
type, then the number of  $\mathrm{Aut}(X(A))$-orbits on $X(A)$ is 
finite. Moreover, in this case there are only finite number of $G$-orbits on $X(A)$, where $G$ is the subgroup of $\mathrm{Aut}(X(A))$, generated
by $\mathbb{T}$ and the $\mathbb{G}_a$-subgroup, corresponding to $\delta$.
\end{cor}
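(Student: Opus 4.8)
The plan is to deduce the statement from Lemma~\ref{eqt}, Theorem~\ref{mmaaiinn} and Remark~\ref{zz2}. By Lemma~\ref{eqt}, since $\delta$ is of horizontal type there is a pair $(i,j)$ with $\delta(T_{ij})\neq 0$; I would first promote this to the assertion that $Y(A)$ is not rigid. Write $R(A)=\KK[Y(A)][S_1,\dots,S_m]$ and consider the $\mathbb{Z}^m$-grading by degree in the variables $S_k$. This grading is a quotient of the $\mathbb{T}$-grading, so $\delta$ is homogeneous for it of some degree $d_0$, and $d_0\in\mathbb{Z}_{\ge 0}^m$ because $\delta(T_{ij})$ is a nonzero element of $R(A)$ of $S$-degree $d_0$. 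For each $k$ the element $\delta(S_k)$ then has $S$-degree $e_k+d_0$, whose $k$-th coordinate is positive, so $S_k$ divides $\delta(S_k)$; since for any locally nilpotent derivation of a domain one has $\deg_\delta(fg)=\deg_\delta f+\deg_\delta g$, an element dividing its own $\delta$-image must be annihilated, and hence $\delta(S_k)=0$ for every $k$. Therefore $S^{d_0}\in\ker\delta$, one may invert $S^{d_0}$, and $\widetilde\delta:=S^{-d_0}\delta$ is a locally nilpotent derivation of $R(A)[S^{-d_0}]$ with $\widetilde\delta(T_{ij})=S^{-d_0}\delta(T_{ij})\in\KK[Y(A)]$; its restriction to $\KK[Y(A)]$ is a nonzero LND, so $Y(A)$ is not rigid. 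Note that if $d_0=0$ then $\widetilde\delta=\delta$, so $\delta$ itself restricts to a nonzero LND of $\KK[Y(A)]$ and, as $\delta(S_k)=0$, fixes each $S_k$; in particular $\mathcal{H}_\delta$ preserves the subvariety $Y(A)\cong V(S_1,\dots,S_m)$ and acts on it nontrivially.

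Granting the non-rigidity of $Y(A)$, the first assertion is immediate from Theorem~\ref{mmaaiinn}. For the second I would keep track of the group appearing in the proof of Theorem~\ref{mmaaiinn}, as recorded in Remark~\ref{zz2}: $X(A)$ is the disjoint union of the finitely many sets $L(J)$; each $L(J)$ with $J\neq\varnothing$ is a single $\mathbb{T}$-orbit, hence a single $G$-orbit for $G=\langle\mathbb{T},\mathcal{H}_\delta\rangle$; and $L(\varnothing)$ is covered by finitely many orbits of the group generated by $\mathbb{T}$ and any $\mathbb{G}_a$-subgroup acting nontrivially on $Y(A)$ --- in particular, by finitely many $G$-orbits once we know $\mathcal{H}_\delta$ acts nontrivially on $Y(A)$. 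Since the index set of the $L(J)$ is finite, $X(A)$ is then covered by finitely many $G$-orbits, and as these refine the $\mathrm{Aut}(X(A))$-orbits this also re-proves the first assertion.

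The step I expect to require the most care is the clause ``$\mathcal{H}_\delta$ acts nontrivially on $Y(A)$'' used in the second assertion: by the first paragraph this holds exactly when $d_0=0$. When $d_0\neq 0$ the derivation $\delta$ shifts $S$-degrees, $\mathcal{H}_\delta$ acts trivially on $Y(A)\cong V(S_1,\dots,S_m)$, and one can only feed the $\mathbb{G}_a$-subgroup built from $\widetilde\delta$ into Remark~\ref{zz2} (which still gives finiteness of the number of $\mathrm{Aut}(X(A))$-orbits, but a priori for a larger group than $\langle\mathbb{T},\mathcal{H}_\delta\rangle$). The cleanest remedy is to observe --- or to read off from the explicit description of homogeneous horizontal LNDs on trinomial algebras in \cite[Section~4]{G} and \cite[Lemma~10]{G} --- that a $\mathbb{T}$-homogeneous horizontal LND may always be taken with $d_0=0$; with such a choice every ingredient above is in place, and the proof becomes a direct combination of Lemma~\ref{eqt}, Theorem~\ref{mmaaiinn} and Remark~\ref{zz2}.
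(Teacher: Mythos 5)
Your overall route (Lemma~\ref{eqt} plus Theorem~\ref{mmaaiinn} plus Remark~\ref{zz2}) is the same as the paper's, and your first paragraph is correct; it even supplies a step the paper leaves implicit: from a $\mathbb{T}$-homogeneous $\delta$ with $\delta(T_{ij})\neq 0$ you deduce $\delta(S_k)=0$ for all $k$ and, after dividing by the kernel element $S^{d_0}$, a nonzero LND of $\KK[Y(A)]$, so $Y(A)$ is not rigid and the first assertion does follow from Theorem~\ref{mmaaiinn}.

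The gap is in the second assertion. That claim concerns the specific group $G=\langle\mathbb{T},\mathcal{H}_\delta\rangle$ attached to the \emph{given} $\delta$, and your argument only covers the case $d_0=0$: when $d_0\neq 0$ you feed a different $\mathbb{G}_a$-subgroup (the one built from $\widetilde\delta$) into Remark~\ref{zz2}, and your proposed remedy --- replacing $\delta$ by a horizontal LND with $d_0=0$ --- changes $\delta$ and hence changes $G$, so it proves a different statement. Moreover the case $d_0\neq 0$ genuinely occurs: if $\delta_0$ is a horizontal $\mathbb{T}$-homogeneous LND with $\delta_0(S_k)=0$, then $S_1^c\delta_0$ is again locally nilpotent, $\mathbb{T}$-homogeneous and horizontal, with $d_0=c\,e_1\neq 0$; so one cannot claim that the given $\delta$ ``may be taken'' with $d_0=0$. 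The repair is to read Remark~\ref{zz2} the way the paper itself uses it (inside the proof of Lemma~\ref{eqt}): the hypothesis actually used in the proof of Theorem~\ref{mmaaiinn} is only that the LND generating the $\mathbb{G}_a$-subgroup satisfies $\delta(T_{ij})\neq 0$ as an element of $R(A)$, not that $\mathcal{H}_\delta$ acts nontrivially on the zero-section copy $V(S_1,\dots,S_m)\cong Y(A)$. Indeed, even for $d_0\neq 0$ one has $\delta(T_{ij})=f\,S^{d_0}$ with $f\neq 0$, which is nonzero at suitable points of $L(\{T_{ij}\})$ having nonzero $S$-coordinates, so $\exp(s\delta)$ moves such points off $\{T_{ij}=0\}$ and the openness/dimension argument for $L(\varnothing)$ goes through verbatim with $G=\langle\mathbb{T},\mathcal{H}_\delta\rangle$. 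With that reading, Lemma~\ref{eqt} hands you exactly the needed hypothesis for the given $\delta$, and the second assertion follows with no assumption on $d_0$.
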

\begin{proof}
The fact that if there exists a $\mathbb{T}$-homogeneous LND of the horizontal type, then the number of $\mathrm{Aut}(X(A))$-orbits is finite, follows from Lemma~\ref{eqt} and Theorem~\ref{mmaaiinn}. Finiteness of  the number of $G$-orbits in these conditions follows from Remark~\ref{zz2}.
\end{proof}

\section{Varieties with a torus action of complexity one}

It is proved in~\cite[Corollary~1.9]{H-W}, that any normal rational irreducible affine variety $Z$ without nonconstant invertible functions, admiting a torus action of complexity one, can be canonically obtained by a categorical quotient of a trinomial variety $\overline{Z}$ by an action of a quasitorus $H\subseteq \mathbb{H}$, where $\mathbb{H}$ is the the centralizer of  $\mathbb{T}$ in $\mathrm{Aut}(X(A))$ which is also quasitorus.
This realization of the variety $Z$ by the categorical quotient is the Cox realization of this variety, see~\cite{ADHL}. It is proved in~\cite{AG1} that there is the following exact sequence of groups: 
$$
1\rightarrow H\rightarrow \widetilde{\mathrm{Aut}(\overline{Z})}\rightarrow \mathrm{Aut}(Z)\rightarrow 1,
$$
where $\widetilde{\mathrm{Aut}(\overline{Z})}$ is the normalizer of $H$ in automorphism group of $\overline{Z}$. Here the mapping $\widetilde{\mathrm{Aut}(\overline{Z})}\rightarrow \mathrm{Aut}(Z)$ is the restriction of automorphisms from $\mathbb{K}[\overline{Z}]$ to $H$-invariants. 

\begin{lem}\label{l41}
 If the number of $\widetilde{\mathrm{Aut}(\overline{Z})}$-orbits on $\overline{Z}$ is finite, then the number of $\mathrm{Aut}(Z)$-orbits on $Z$ is finite.
\end{lem}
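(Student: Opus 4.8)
The plan is to work with the categorical quotient morphism $\pi\colon\overline{Z}\to Z$ attached to the inclusion $\mathbb{K}[\overline{Z}]^{H}\hookrightarrow\mathbb{K}[\overline{Z}]$, and to exploit two properties of $\pi$: it is surjective on closed points, and it is equivariant with respect to the homomorphism $\widetilde{\mathrm{Aut}(\overline{Z})}\to\mathrm{Aut}(Z)$ appearing in the exact sequence from \cite{AG1}. Granting these, the lemma is immediate: if $\overline{Z}=O_{1}\cup\ldots\cup O_{N}$ is a covering by $\widetilde{\mathrm{Aut}(\overline{Z})}$-orbits, then $Z=\pi(O_{1})\cup\ldots\cup\pi(O_{N})$ by surjectivity, and by equivariance each $\pi(O_{i})$ is contained in a single $\mathrm{Aut}(Z)$-orbit; hence $Z$ is covered by at most $N$ orbits of $\mathrm{Aut}(Z)$.

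For surjectivity I would use that $H$ is a quasitorus, hence linearly reductive in characteristic zero, so $\pi$ is a good quotient; in particular every fiber of $\pi$ contains a closed $H$-orbit and is nonempty, which gives surjectivity on closed points. For equivariance, let $\widetilde\varphi\in\widetilde{\mathrm{Aut}(\overline{Z})}$ and let $\varphi\in\mathrm{Aut}(Z)$ be its image under the map of the exact sequence. Since $\widetilde\varphi$ normalizes $H$, the comorphism $\widetilde\varphi^{*}$ preserves the subalgebra $\mathbb{K}[\overline{Z}]^{H}$ of $H$-invariants, and by the very definition of the map $\widetilde{\mathrm{Aut}(\overline{Z})}\to\mathrm{Aut}(Z)$ its restriction to $\mathbb{K}[\overline{Z}]^{H}$ equals $\varphi^{*}$. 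Thus $\widetilde\varphi^{*}$ and $\varphi^{*}$ commute with the inclusion $\mathbb{K}[\overline{Z}]^{H}\hookrightarrow\mathbb{K}[\overline{Z}]$, and dualizing this identity of ring homomorphisms yields $\pi\circ\widetilde\varphi=\varphi\circ\pi$ as morphisms $\overline{Z}\to Z$.

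It then remains only to assemble the pieces: if $\bar z_{1},\bar z_{2}\in O_{i}$, say $\bar z_{2}=\widetilde\varphi\cdot\bar z_{1}$, then $\pi(\bar z_{2})=\varphi(\pi(\bar z_{1}))$, so $\pi(O_{i})$ lies in the $\mathrm{Aut}(Z)$-orbit of $\pi(\bar z_{1})$, and together with surjectivity of $\pi$ this finishes the proof. The only non-formal inputs are the surjectivity of the quotient morphism (a standard fact about good quotients by reductive groups) and the compatibility of $\pi$ with the automorphism-group homomorphism; the latter is essentially the content of the construction in \cite{AG1}, so the point requiring care is simply to spell out that the restriction map $\widetilde\varphi\mapsto\varphi$ is exactly intertwined with $\pi$. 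I do not expect a serious obstacle: this is a soft transfer statement once $\pi$ and its equivariance are made explicit.
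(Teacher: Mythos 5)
Your proposal is correct and follows essentially the same route as the paper: the paper's proof likewise rests on surjectivity of the quotient morphism $\pi\colon\overline{Z}\to Z$ and the fact that $\pi$ maps each $\widetilde{\mathrm{Aut}(\overline{Z})}$-orbit into (the paper even says onto) a single $\mathrm{Aut}(Z)$-orbit. Your version merely spells out the equivariance via comorphisms and the good-quotient surjectivity, which the paper takes as standard.
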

\begin{proof}
The quotient morphism
$$
\pi\colon \overline{Z}\rightarrow \overline{Z}/\!/H=Z
$$
is surjective and the image of each $\widetilde{\mathrm{Aut}(\overline{Z})}$-orbit coincide with a $\mathrm{Aut}(Z)$-orbit.    
\end{proof}

\begin{propos}\label{p42}
Suppose 
$$Z=X(A)/\!/H$$
is the Cox  realization of an irreducible affine algebraic variety $Z$ with a torus action of complexity 1. Then if there exists an LND of $R(A)$, that is $H$-homogeneous of degree zero and at least one variable $T_{ij}\in R(A)$ is not in its kernel, then the number of $\mathrm{Aut}(Z)$-orbits is finite.  
\end{propos}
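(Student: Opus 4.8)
The plan is to derive the statement from Lemma~\ref{l41}: it is enough to show that $\widetilde{\mathrm{Aut}(\overline{Z})}$, the normalizer of $H$ in $\mathrm{Aut}(\overline{Z})$ with $\overline{Z}=X(A)$, acts on $\overline{Z}$ with only finitely many orbits. I would work with the subgroup $G=\langle\mathbb{T},\mathcal{H}_\delta\rangle\subseteq\mathrm{Aut}(X(A))$ generated by the torus $\mathbb{T}$ and by $\mathcal{H}_\delta=\{\exp(s\delta)\mid s\in\KK\}$. The first task is to verify $G\subseteq\widetilde{\mathrm{Aut}(\overline{Z})}$, i.e.\ that $\mathbb{T}$ and $\mathcal{H}_\delta$ normalize $H$. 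For $\mathbb{T}$ this is immediate: $H\subseteq\mathbb{H}$ and $\mathbb{H}$ is the centralizer of $\mathbb{T}$ in $\mathrm{Aut}(X(A))$, so every element of $H$ commutes with every element of $\mathbb{T}$, whence $\mathbb{T}$ normalizes $H$. For $\mathcal{H}_\delta$ I would use that ``$\delta$ is $H$-homogeneous of degree zero'' means precisely that $\delta$ preserves the $\mathfrak{X}(H)$-grading of $R(A)$, equivalently that $\delta$ commutes with the $H$-action; hence each automorphism $\exp(s\delta)$ commutes with $H$, so $\mathcal{H}_\delta$ centralizes, in particular normalizes, $H$. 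Since a subgroup with finitely many orbits forces the ambient group to have finitely many orbits, it remains to prove that $G$ acts on $X(A)$ with finitely many orbits.

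For that I would follow the proof of Theorem~\ref{mmaaiinn} almost verbatim, stratifying $X(A)=\bigsqcup_{J}L(J)$ over subsets $J$ of the set of variables $T_{ij}$. For $J\neq\varnothing$ the stratum $L(J)$ is covered by finitely many $\mathbb{T}$-orbits, hence by finitely many $G$-orbits (when $m>0$ the affine-space factor of $X(A)$ contributes only the finitely many patterns of vanishing $S_k$-coordinates, since $\mathbb{T}$ contains the torus scaling the $S_k$). For $J=\varnothing$ I would apply the dimension argument of Theorem~\ref{mmaaiinn} to our $\delta$: by \cite[Principle~5]{F} the element $\delta(T_{ij})$ is not divisible by $T_{ij}$ in $R(A)$ for a pair $(i,j)$ with $\delta(T_{ij})\neq0$, so there is $x\in L(\{T_{ij}\})$ with $\delta(T_{ij})(x)\neq0$; the group $G$ is algebraically generated, so its orbits are locally closed by \cite[Proposition~1.3]{AFKKZ}; the orbit $Gx$ has dimension greater than $\dim\mathbb{T}=\dim X(A)-1$ and is therefore open, and its complement, of dimension $\le\dim\mathbb{T}$, is covered by finitely many $G$-orbits since every point of $L(\varnothing)$ has a $G$-orbit of dimension at least $\dim\mathbb{T}$. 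Hence $G$, and a fortiori $\widetilde{\mathrm{Aut}(\overline{Z})}\supseteq G$, has finitely many orbits on $\overline{Z}$, and Lemma~\ref{l41} gives the desired finiteness of the number of $\mathrm{Aut}(Z)$-orbits.

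The step that needs the most care is the reuse of the Theorem~\ref{mmaaiinn} argument, since there the LND is taken $\mathbb{T}$-homogeneous while here $\delta$ is only assumed $H$-homogeneous of degree zero. This is harmless: that argument uses $\delta$ only through the facts that the maps $\exp(s\delta)$ are automorphisms and that $\delta(T_{ij})\notin(T_{ij})$ in $R(A)$, both valid for any LND with $\delta(T_{ij})\neq0$; $\mathbb{T}$-homogeneity is only what one would need in order to invoke Lemma~\ref{eqt} and Corollary~\ref{gortip} directly. One could instead try to replace $\delta$ by a $\mathbb{T}$-homogeneous component and apply Corollary~\ref{gortip}, but the homogeneous components one can extract by the extremal-component argument of Section~\ref{os} correspond to vertices of the support of $\delta$, which need not include any component acting nontrivially on the variables $T_{ij}$, so I would avoid that route.
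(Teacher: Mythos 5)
Your argument is correct and is essentially the paper's own: the paper likewise takes $G=\langle\mathbb{T},\mathcal{H}_\delta\rangle$, notes that $H$-homogeneity of degree zero makes $\mathcal{H}_\delta$ commute with $H$ (so $G\subseteq\widetilde{\mathrm{Aut}(X(A))}$), and concludes with Lemma~\ref{l41}. The only difference is that where you re-run the stratification argument of Theorem~\ref{mmaaiinn} to establish finiteness of the $G$-orbits (correctly observing that $\mathbb{T}$-homogeneity of $\delta$ is not needed for that argument), the paper simply cites Remark~\ref{zz2}, which records exactly this fact.
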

\begin{proof}
 Let us consider the $\mathbb{G}_a$-subgroup corresponding to the LND from the statement of the proposition. Denote by $G$ the subgroup generated by $\mathbb{T}$ and this $\mathbb{G}_a$-subgroup. Since $T_{ij}$ is not in the kernel of the LND, Remark~\ref{zz2}  implies that the number of $G$-orbits on $X(A)$ is finite. On the other hand since this LND is $H$-homogeneous of degree zero, the corresponding $\mathbb{G}_a$-subgroup lies in $\widetilde{\mathrm{Aut}(X(A))}$, i.e. normilizes  $H$ and even commutes with $H$. Therefore, $G\subseteq \widetilde{\mathrm{Aut}(X(A))}$. By Lemma~\ref{l41}, the number of $\mathrm{Aut}(Z)$-orbits is finite.
\end{proof}

Proposition~\ref{p42} gives us a sufficient condition for the number of $\mathrm{Aut}(Z)$-orbits to be finite in terms of Cox realization of this variety. However, it would be useful to have such a condition in the internal terms of the variety $Z$. It is given by the following theorem.
\begin{theorem}\label{glavt}
Let $Z$ be a normal rational irreducible affine variety with only constant invertible functions and with an action of a torus $\widehat{T}$ of complexity~1. Suppose $Z$, admits a $\widehat{T}$-homogeneous LND of the horizontal type. Then the number of $\mathrm{Aut}(Z)$-orbits is finite.
\end{theorem}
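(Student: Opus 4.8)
The plan is to reduce Theorem~\ref{glavt} to Proposition~\ref{p42} via the Cox construction. Since $Z$ is normal, rational, irreducible, affine, without nonconstant invertible functions, and carries a torus action of complexity one, by \cite[Corollary~1.9]{H-W} it admits a Cox realization $Z=X(A)/\!/H$ for a trinomial variety $X(A)$ and a quasitorus $H\subseteq\mathbb{H}$, where $\mathbb{H}$ is the centralizer of $\mathbb{T}$ in $\mathrm{Aut}(X(A))$. The divisor class group of $Z$ being finitely generated is needed here; this follows for normal affine varieties of complexity one by the structure theory cited, so the Cox ring is finitely generated and $X(A)$ is well defined. In view of Proposition~\ref{p42}, it suffices to show that the given $\widehat{T}$-homogeneous LND $\partial$ of horizontal type on $\KK[Z]$ lifts to an LND $\delta$ on $R(A)=\KK[X(A)]$ which is $H$-homogeneous of degree zero and does not annihilate all the variables $T_{ij}$.

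First I would recall the standard lifting mechanism for the Cox construction: the torus $\widehat{T}$ acting on $Z$ lifts to a subtorus of $\mathbb{H}$-normalizing automorphisms of $X(A)$, and in fact the relevant maximal torus $\mathbb{T}$ on $X(A)$ surjects onto $\widehat{T}$ modulo $H$; the $\widehat{T}$-grading on $\KK[Z]$ is exactly the $H$-invariant part of the $\mathbb{T}$-grading on $R(A)$. By the general theory of lifting automorphisms and derivations through the Cox construction (as used in \cite{AG1} and implicit in \cite{H-W}), a $\widehat{T}$-homogeneous LND on $\KK[Z]=\KK[X(A)]^H$ extends uniquely to an $H$-homogeneous LND $\delta$ on $R(A)$ of degree zero in the class group grading; local nilpotency is preserved because $R(A)$ is integral over — indeed a finite-type extension compatible with — the $H$-action, and the exponential of $\delta$ restricts to $\exp(\partial)$. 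Concretely, $\delta$ is $\mathbb{T}$-homogeneous (after choosing a $\mathbb{T}$-homogeneous representative, using the $\mathbb{Z}^n$-graded extremal-component argument from Section~\ref{os} if needed) and $H$-degree zero.

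Next I would verify the horizontality-to-nonvanishing translation. Since $\partial$ has horizontal type with respect to $\widehat{T}$, it acts nontrivially on $\KK(Z)^{\widehat{T}}$. Pulling back along $\pi$, the field $\KK(Z)^{\widehat{T}}$ corresponds to $\KK(X(A))^{\mathbb{T}}$ (the $H$-invariant rational functions that are also $\mathbb{T}$-invariant coincide with $\widehat{T}$-invariants on $Z$), so $\delta$ does not annihilate $\KK(X(A))^{\mathbb{T}}$, i.e. $\delta$ has horizontal type as a $\mathbb{T}$-homogeneous LND on $R(A)$. By Lemma~\ref{eqt}, a $\mathbb{T}$-homogeneous LND on $R(A)$ of vertical type kills every $T_{ij}$; contrapositively, horizontality of $\delta$ forces $\delta(T_{ij})\neq 0$ for some pair $(i,j)$. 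Thus the hypotheses of Proposition~\ref{p42} are met, and the number of $\mathrm{Aut}(Z)$-orbits is finite.

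The main obstacle is making the lifting step fully rigorous: one must argue that $\partial$ does extend to a genuine regular LND on the Cox ring $R(A)$ (not merely a rational derivation), that the extension can be taken $H$-homogeneous of degree zero, and that local nilpotency survives the extension. This is where one leans on the exact sequence $1\to H\to\widetilde{\mathrm{Aut}(\overline{Z})}\to\mathrm{Aut}(Z)\to 1$ from \cite{AG1}: the $\mathbb{G}_a$-subgroup $\{\exp(s\partial)\}\subseteq\mathrm{Aut}(Z)$ is connected, hence lifts to a $\mathbb{G}_a$-subgroup of $\widetilde{\mathrm{Aut}(\overline{Z})}$ commuting with $H$ (the lift of a connected unipotent group along a central quasitorus extension is canonical), and differentiating this lifted $\mathbb{G}_a$-action yields the desired $H$-fixed LND $\delta$ on $R(A)$ restricting to $\partial$. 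Once the lift is in hand the rest is the bookkeeping sketched above.
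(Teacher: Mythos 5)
Your proposal is correct and follows essentially the same route as the paper: lift the horizontal $\widehat{T}$-homogeneous LND (equivalently its $\mathbb{G}_a$-subgroup) to the total coordinate space, note that horizontality upstairs together with Lemma~\ref{eqt} forces $\delta(T_{ij})\neq 0$ for some $(i,j)$, and conclude via the trinomial results and Lemma~\ref{l41} (your Proposition~\ref{p42} packaging is the same as the paper's use of Corollary~\ref{gortip} plus Lemma~\ref{l41}). The lifting step you flag as the main obstacle is exactly what the paper settles by citing \cite[Theorem~4.2.3.2]{ADHL}, so your sketch via the exact sequence of \cite{AG1} is just a rephrasing of the same standard mechanism.
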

\begin{proof}
By our assumpotion $Z$, admits a $\widehat{T}$-homogeneous LND of the horizontal type. It correspondes to a $\mathbb{G}_a$-subgroup $\Omega$ in $\mathrm{Aut}(Z)$, that acts nontrivially on $\mathbb{K}(Z)^{\widehat{T}}$. By \cite[Theorem~4.2.3.2]{ADHL} there exists a $\mathbb{G}_a$-subgroup $\overline{\Omega}$ in $\widetilde{\mathrm{Aut}(\overline{Z})}$ such, that its restriction to $\KK[Z]$ coinside with $\Omega$. Then $\overline{\Omega}$ is a $\mathbb{T}$-homogeneous $\mathbb{G}_a$-subgroup acting on $\KK(\overline{Z})^{\mathbb{T}}=\mathbb{K}(Z)^{\widehat{T}}$  nontrivially. So $\overline{\Omega}$ corresponds to an LND of horizontal type on~$\overline{Z}$. By Corollary~\ref{gortip}, the number of $G$-orbits on $\overline{Z}$ is finite, where $G$ is the group generated by $\mathbb{T}$ and~$\overline{\Omega}$. Since $G\subseteq \widetilde{\mathrm{Aut}(Z)}$, the number of $\widetilde{\mathrm{Aut}(Z)}$-orbits on $\overline{Z}$ is finite. By Lemma~\ref{l41}, the number of $\mathrm{Aut}(Z)$-orbits on $Z$ is finite.
\end{proof}

\begin{ex}
   Let us concider the hypersurface
    $$X=\{x_1\ldots x_k(y_1^{b_1}\ldots y_m^{b_m}+z_1^{c_1}\ldots z_l^{c_l})=uv_1^{r_1}\ldots v_p^{r_p}\}\subseteq \KK^{k+l+m+p+1}.$$
   One can define an action of $k+l+m+p-1$-dimensional torus on this hypersurface analogicaly to the action on a trinomial hypersurface. More precisely,  each variable corresponds to a degree and we have two linear relationship on these degrees. We can consider the following LND of the horizontal type:   
    $$
    \delta(u)=b_1x_1\ldots x_ky_1^{b_1-1}\ldots y_m^{b_m},\qquad \delta(y_1)=v_1^{r_1}\ldots v_p^{r_p},
    $$
    on the other variables $\delta$ equals zero. It is easy to see that $\delta$ is indeed locally nilpotent. It is of horizontal type since it is nonzero at the following invariant of the torus    
    $$
    \frac{uv_1^{r_1}\ldots v_p^{r_p}}{x_1\ldots x_kz_1^{c_1}\ldots z_l^{c_l}}
    $$
    The variety $X$ is rational since the open subset $\{v_1^{r_1}\ldots v_p^{r_p}\neq 0\}\subseteq X$ is isomorphic to the open subset in affine space. $X$ does not admit nonconstant invertible functions since it does not admit nonconstant homogeneous invertible functions. This is true since there are no nonzero $\mathfrak{X}(T)$-degrees opposite to each other. Finally, the normality of the variety $X$ follows from Serre's normality criterion, since the set of singular points in $X$ has codimension 2.
    By Theorem~\ref{glavt}, the number of $\mathrm{Aut}(X)$-orbits on $X$ is finite.
\end{ex}

\end{document}